\theoremstyle{definition}
\newtheorem{theorem}{Theorem}[section]
\newtheorem{corollary}[theorem]{Corollary}
\newtheorem{example}[theorem]{Example}
\theoremstyle{remark}
\newtheorem{remark}[theorem]{Remark}
\numberwithin{equation}{section}
\newcommand{\bC}{\mathbb{C}}
\newcommand{\bR}{\mathbb{R}}
\newcommand{\prob}{\mathbb{P}}
\newcommand{\sgrp}{\mathfrak{S}}
\newcommand{\Sym}{\ensuremath{\operatorname{Sym}}}
\newcommand{\QSym}{\ensuremath{\operatorname{QSym}}}
\newcommand{\qs}{\ensuremath{\mathcal{S}}}
\newcommand{\llex}{<_{\mathit{lex}}}
\newcommand{\glex}{>_{\mathit{lex}}}
\newcommand{\lrlex}{<_{\mathit{revlex}}}
\newcommand{\mmax}{\mathit{max}}
\newcommand{\btr}{\blacktriangleleft}
\newcommand{\set}{\mathrm{set}} 
\newcommand{\des}{\mathrm{Des}} 
\newcommand{\comp}{\mathrm{comp}} 
\newcommand{\cont}{\mathrm{content}} 
\newcommand{\refines}{\prec}   
\newcommand{\suchthat}{\;|\;}
\newcommand{\spam}{\operatorname{span}}
\newcommand{\Bcone}{B^+_{\text{cone}}}
\newcommand{\Acone}{A^+_{\text{cone}}}
\newcommand{\Bslice}{B^+_{\text{slice}}}
\newcommand{\Aslice}{A^+_{\text{slice}}}
\newlength\cellsize \setlength\cellsize{15\unitlength}
\newcommand\cellify[1]{\def\thearg{#1}\def\nothing{}%
\ifx\thearg\nothing
\vrule width0pt height\cellsize depth0pt\else
\hbox to 0pt{\usebox2\hss}\fi%
\vbox to 15\unitlength{
\vss
\hbox to 15\unitlength{\hss$#1$\hss}
\vss}}
\newcommand\tableau[1]{\vtop{\let\\=\cr
\setlength\baselineskip{-16000pt}
\setlength\lineskiplimit{16000pt}
\setlength\lineskip{0pt}
\halign{&\cellify{##}\cr#1\crcr}}}
\newcommand\expath[1]{%
\hbox to 0pt{\usebox3\hss}%
\vbox to 15\unitlength{
\vss
\hbox to 15\unitlength{\hss$#1$\hss}
\vss}}
\newcommand\bas[1]{\omit \vbox to \cellsize{ \vss \hbox to \cellsize{\hss$#1$\hss} \vss}}
\begin{document}

\title[The probability of positivity]{The probability of positivity in symmetric and quasisymmetric functions}

\author{Rebecca Patrias}
\address{
Laboratoire de Combinatoire et d'Informatique Math\'{e}matique, 
Universit\'{e} du Qu\'{e}bec \`{a} Montr\'{e}al,
Montr\'{e}al QC H3C 3P8, Canada}
\email{patriasr@lacim.ca}

\author{Stephanie van Willigenburg}
\address{
 Department of Mathematics,
 University of British Columbia,
 Vancouver BC V6T 1Z2, Canada}
\email{steph@math.ubc.ca}

\subjclass[2010]{Primary 05E05; Secondary 05E45, 60C05}
\keywords{composition tableau, $e$-positive, fundamental-positive, Kostka number, quasisymmetric Schur-positive, Schur-positive, Young tableau}

\begin{abstract} Given an element in a finite-dimensional real vector space, $V$, that is a nonnegative linear combination of basis vectors for some basis $B$, we compute the probability that it is furthermore a nonnegative linear combination of basis vectors for a second basis, $A$. We then apply this general result to combinatorially compute the probability that a symmetric function is Schur-positive ({recovering the recent result of Bergeron--Patrias--Reiner}), $e$-positive or $h$-positive. Similarly we compute the probability that a quasisymmetric function is quasisymmetric Schur-positive or fundamental-positive. In every case we conclude that the probability tends to zero as the degree of a function tends to infinity. 
\end{abstract}

\maketitle
\tableofcontents
\section{Introduction}\label{sec:intro} The subject of when a symmetric function is Schur-positive, that is, a nonnegative linear combination of Schur functions, is an active area of research. If a homogeneous symmetric function of degree $n$ is Schur-positive, then it is the image of some representation of the symmetric group $\sgrp _n$ under the Frobenius characteristic map. Furthermore, if it is a polynomial, then it is the character of a polynomial representation of the general linear group $GL(n,\bC)$. Consequently, much effort has been devoted to determining when the difference of two symmetric  functions is Schur-positive, for example \cite{BBR06, FFLP05, KWvW08,  Kir04, lpp,  McN08, McvW09b, Oko97}. While this question is still wide open in full generality, there exist well-known examples of Schur-positive functions.  These include the product of two Schur functions, skew Schur functions, and the chromatic symmetric function of the incomparability graph of $(3+1)$-free  posets \cite{Gasharov}, the latter of which are further conjectured to be $e$-positive, that is, a nonnegative linear combination of elementary symmetric functions \cite{StanleyStembridge}. One other well-known example that is conjectured to be Schur-positive is the bigraded Frobenius characteristic of the space of diagonal harmonics \cite{HHLRU}, which was recently proved to be fundamental-positive, namely, a nonnegative linear combination of fundamental quasisymmetric functions \cite{CarlssonMellit}. This result is better known as the proof of the shuffle conjecture. 

Quasisymmetric functions, a natural generalization of symmetric functions,   are further related to  positivity via representation theory since the 1-dimensional representations of the 0-Hecke algebra map to fundamental quasisymmetric functions under the quasisymmetric characteristic map \cite{DKLT}. There also exist 0-Hecke modules whose quasisymmetric characteristic map images are quasisymmetric Schur functions \cite{0Hecke}. Additionally, if a quasisymmetric function is both symmetric and a nonnegative linear combination of quasisymmetric Schur functions, then it is Schur-positive \cite{SSQSS}. While nonnegative linear combinations of quasisymmetric functions are not as extensively studied, some progress has been made in this direction, for example \cite{AlexSulz, BLvW, SSQSS, LamP, McN14}, and this area is ripe for study. 

This paper is structured as follows. {In Theorem~\ref{thm:genthm}, we calculate the probability that an element of a vector space that is a nonnegative linear combination of basis elements is also a nonnegative linear combination of the elements of a second basis, where the bases satisfy certain conditions.} In Section~\ref{sec:probsym}, we then apply this theorem and compute the probability that a symmetric function is Schur-positive or $e$-positive in Corollaries~\ref{cor:smprob}, \ref{cor:ehsprob}, \ref{cor:emprob}. We show that these probabilities tend to 0 as the degree of the function tends to infinity in Corollaries~\ref{cor:sm0},  \ref{cor:ehs0}, \ref{cor:em0}. We then apply Thoerem~\ref{thm:genthm} again in Section~\ref{sec:probqsym} to compute the probability that a quasisymmetric function is quasisymmetric Schur-positive or fundamental-positive in Corollaries~\ref{cor:SMprob}, \ref{cor:SFprob}, \ref{cor:FMprob}, and similarly show these probabilities tend to 0 in Corollaries~\ref{cor:SM0}, \ref{cor:SF0}, \ref{cor:FM0}.

\section{The probability of vector positivity}\label{sec:probvec}
Let $V$ be a finite-dimensional real vector space with bases $A=\{A_0,\ldots,A_d\}$ and $B=\{B_0,\ldots,B_d\}$, and suppose further that
\[A_j=\sum_{i\leq j} a_i^{(j)}B_i,\] where $a_j^{(j)}=1$ and $a_i^{(j)}\geq 0$. In particular, note that $A_0=B_0$. We say that $f\in V$ is \emph{$A$-positive} (respectively, \emph{$B$-positive}) if $f$ is a nonnegative linear combination of $\{A_0,\ldots,A_d\}$ (respectively, $\{B_0,\ldots,B_d\}$). We would like to answer the following question: What is the probability that if $f\in V$ is $B$-positive, then it is furthermore $A$-positive? We denote this probability by $\prob(A_i \suchthat B_i)$ and note that any $A$-positive $f\in V$ will also necessarily be $B$-positive. 

In order to calculate $\prob (A_i \suchthat B_i)$, observe that any $B$-positive $f\in V$ can be written as 
\[f=\sum_{i=0}^d b_iB_i,\]
where each $b_i\geq 0$, and the set of all $B$-positive elements of $V$ forms a cone
\[\Bcone=\left\{\sum_{i=0}^d b_iB_i \suchthat b_i\in\mathbb{R}_{\geq0}\right\}.\] Inside the cone $\Bcone$ is the cone of $A$-positive elements of $V$
\[\Acone=\left\{\sum_{i=0}^db_iB_i \suchthat b_i\in\mathbb{R}_{\geq0}\text{ and the expression is $A$-positive}\right\}.\] 

We define $\prob(A_i \suchthat B_i)$ to be the ratio of the volume of the slice of $A^+_{\text{cone}}$ defined by \[\Aslice=\left\{\sum_{i=0}^d b_iB_i \suchthat b_i\in\mathbb{R}_{\geq0}\text{, the expression is $A$-positive, and }\sum_{i=0}^db_i=1\right\}\] to the volume of the slice of $B^+_{\text{cone}}$ \[\Bslice=\left\{\sum_{i=0}^d b_iB_i \suchthat b_i\in\mathbb{R}_{\geq0}\text{ and }\sum_{i=0}^db_i=1\right\}.\] We could, equivalently, replace ``1'' in both definitions with any positive real number and obtain the same ratio. Note that this probability will depend on the choice of bases $\{A_0,\ldots,A_d\}$ and $\{B_0,\ldots,B_d\}$; however, each application of the following theorem (see Corollaries~\ref{cor:smprob}, \ref{cor:ehsprob}, \ref{cor:emprob}, \ref{cor:SMprob}, \ref{cor:SFprob}, and \ref{cor:FMprob}) comes with a natural choice of bases, and the asymptotics we explore (see Corollaries~\ref{cor:sm0}, \ref{cor:ehs0}, \ref{cor:em0}, \ref{cor:SM0}, \ref{cor:SF0}, and \ref{cor:FM0})  do not depend on this choice. 

The existence of the general statement below was suggested by F. Bergeron and its proof inspired by a conversation with V. Reiner about Schur-positivity.

\begin{theorem}\label{thm:genthm}
Let $\{A_0,\ldots,A_d\}$ and  $\{B_0,\ldots,B_d\}$ be bases of a finite-dimensional real vector space $V$ such that 
\[A_j=\sum_{i\leq j} a_i^{(j)}B_i,\] where $a_j^{(j)}=1$ and $a_i^{(j)}\geq 0$, so in particular, $A_0=B_0$. Then 
\[\prob(A_i\suchthat B_i)=\prod_{j=0}^d\left(\sum_{i=0}^j a_i^{(j)}\right)^{-1}.\]
\end{theorem}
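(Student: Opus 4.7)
The plan is to identify $\Bslice$ and $\Aslice$ as $d$-dimensional simplices sitting in the common affine hyperplane $H=\{\sum b_i B_i : \sum b_i = 1\}$ of $V$, and then compare their volumes via the standard determinant formula for simplex volume. First I would fix the Euclidean structure on $V$ in which $\{B_0,\ldots,B_d\}$ is orthonormal; all volumes will be computed with respect to this structure, and the choice will drop out of the final ratio.

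Next, I would describe the vertices of each slice explicitly. Taking $b_k=1$ and $b_i=0$ for $i\neq k$ shows that $\Bslice$ is the simplex with vertices $B_0,\ldots,B_d$. For $\Aslice$, I would parameterize an $A$-positive element as $f=\sum_j c_j A_j$ with $c_j\geq 0$, substitute $A_j=\sum_{i\leq j} a_i^{(j)} B_i$, and rewrite the condition $\sum_i b_i = 1$ in terms of the $c_j$. Setting $s_j := \sum_{i=0}^j a_i^{(j)}$, this condition becomes
\[\sum_{j=0}^d c_j s_j = 1.\]
Therefore $\Aslice$ is the simplex with vertices $A_j/s_j$ for $j=0,\ldots,d$, since taking $c_k=1/s_k$ and $c_j=0$ for $j\neq k$ satisfies the constraint.

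Now I would compute the ratio of the $d$-dimensional volumes of these two simplices. The slickest way is to pass to the cones over each simplex with apex at the origin: both cones lie at the same distance $h$ from $0$ to $H$, so the ratio of their $(d+1)$-dimensional volumes equals the ratio of the $d$-dimensional volumes of the simplices themselves. The $(d+1)$-dim volume of the simplex with vertices $0,v_0,\ldots,v_d$ is $\tfrac{1}{(d+1)!}\lvert\det(v_0,\ldots,v_d)\rvert$, where the determinant is computed in the orthonormal basis $B$. For $\Bslice$ this determinant is $1$. For $\Aslice$, the columns are $A_j/s_j$, and the matrix whose columns are $A_0,\ldots,A_d$ expressed in the basis $B$ is upper triangular with diagonal entries $a_j^{(j)}=1$, hence has determinant $1$; pulling out the scalar $1/s_j$ from each column gives $\prod_{j=0}^d 1/s_j$. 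Taking the ratio yields
\[\prob(A_i\suchthat B_i) = \prod_{j=0}^d s_j^{-1} = \prod_{j=0}^d\left(\sum_{i=0}^j a_i^{(j)}\right)^{-1}.\]

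I do not anticipate a serious obstacle: the upper triangularity of the change of basis matrix with unit diagonal is what makes the $B$-simplex determinant equal $1$ and isolates the rescaling factors $s_j$ as the only source of volume change. The only point that requires some care is justifying that the ratio of intrinsic $d$-dimensional volumes in $H$ equals the ratio of volumes of the cones over them in $V$; this is the step where I would explicitly invoke the common-apex argument to avoid having to choose coordinates on $H$ itself.
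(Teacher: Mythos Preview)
Your proof is correct and follows essentially the same approach as the paper: identify $\Bslice$ and $\Aslice$ as simplices with vertices $B_j$ and $A_j/s_j$, then compare their volumes via determinants, exploiting the unit-upper-triangular change of basis. The only cosmetic difference is that the paper works directly in the hyperplane $H$ with edge vectors $v_j=B_j-B_0$ and $w_j=A_j/s_j-A_0$ (expanding $w_j$ in the $v_i$ to see the triangularity), whereas you cone off from the origin and compute a $(d{+}1)\times(d{+}1)$ determinant instead; both maneuvers isolate the same factors $s_j^{-1}$.
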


\begin{proof}
Consider
\[\Bslice=\left\{\sum_{i=0}^d b_iB_i \suchthat b_i\in\mathbb{R}_{\geq0}\text{ and }\sum_{i=0}^db_i=1\right\}\]
and the corresponding slice of $\Acone$

\[\Aslice=\left\{\sum_{i=0}^d b_iB_i \suchthat b_i\in\mathbb{R}_{\geq0}\text{, the expression is $A$-positive, and }\sum_{i=0}^db_i=1\right\}.\]

Note that $\Bslice$ is the simplex determined by vertices $B_0,\ldots,B_d$. Define vectors $v_1,\ldots,v_d$ by $v_i=B_i-B_0$ for $1\leq i\leq d$. Then the volume of $\Bslice$ is by definition \[\frac{1}{d!}\lvert \det(v_1,\ldots,v_d)\rvert. \]

The simplex $\Aslice$ is determined by vertices $\left\{\left(\sum_i a_i^{(j)}\right)^{-1}A_j\right\}_{0\leq j\leq d}$. To find its volume, we first define vectors $w_1,\ldots,w_d$ by
\[w_j=\left(\sum_i a_i^{(j)}\right)^{-1}A_j-A_0.\]
We then see that 
\begin{align*}
w_j&=\frac{1}{\sum_ia_i^{(j)}}A_j-A_0\\
&= \frac{1}{\sum_ia_i^{(j)}}\left(B_j+a^{(j)}_{j-1}B_{j-1}+\cdots+a^{(j)}_0B_0\right)-B_0\\
&=\frac{1}{\sum_ia_i^{(j)}}\left(B_j+a^{(j)}_{j-1}B_{j-1}+\cdots+a^{(j)}_0B_0\right)-\frac{1}{\sum_ia_i^{(j)}}\left(B_0+a^{(j)}_{j-1}B_0+\cdots+a_0^{(j)}B_0\right)\\
&= \frac{1}{\sum_ia_i^{(j)}}\left(v_j+a^{(j)}_{j-1}v_{j-1}+\cdots+a^{(j)}_1v_1\right).
\end{align*}
Thus the volume of $\Aslice$, namely the simplex determined by vertices $\left\{\frac{1}{\sum_ia_i^{(j)}}A_j\right\}$, is 

\begin{align*}\frac{1}{d!}\lvert\det(w_1,\ldots,w_d)\rvert&=\frac{1}{d!}\left\lvert\det\left(\frac{1}{\sum_i a_i^{(1)}}v_1,\frac{1}{\sum_ia^{(2)}_i}(v_2+a^{(2)}_1v_1),\ldots,\frac{1}{\sum_ia_i^{(d)}}(v_d+\cdots+a^{(d)}_1v_1)\right)\right\rvert\\
&=\frac{1}{d!}\prod_{j}\frac{1}{\sum_ia_i^{(j)}} \left\lvert\det(v_1,\ldots,v_d)\right\rvert.
\end{align*} The result now follows, since by definition we have that

$$\prob(A_i \suchthat B_i) =\frac{\mbox{volume of } \Aslice}{\mbox{volume of } \Bslice}.$$ \end{proof}


\section{Probabilities of symmetric function positivity}\label{sec:probsym} Before we define the various symmetric functions that will be of interest to us, we need to recall some combinatorial concepts. A \emph{partition}  $\lambda = (\lambda _1, \ldots , \lambda _k)$  of $n$, denoted by $\lambda \vdash n$, is a list of positive integers whose \emph{parts} $\lambda _i$ satisfy $\lambda _1 \geq \cdots \geq \lambda _k$ and $\sum _{i=1}^k \lambda _i = n$. If there exists $\lambda _{m+1} = \cdots = \lambda _{m+j} = i$, then we often abbreviate this to $i^j$. There exist two total orders on partitions of $n$, which will be useful to us. The first of these is \emph{lexicographic order}, which states that given partitions  $\lambda = (\lambda _1, \ldots , \lambda _k)$ and $\mu = (\mu _1, \ldots , \mu _\ell)$ we say that $\mu$ is lexicographically smaller that $\lambda$, denoted by $\mu \llex \lambda$, if $\mu \neq \lambda$ and the first $i$ for which $\mu _i \neq \lambda _i$ satisfies $\mu _i < \lambda _i$. The second is the closely related \emph{reverse lexicographic order}, where we say that $\mu$ is reverse lexicographically smaller that $\lambda$, denoted by $\mu \lrlex \lambda$ if and only if $\mu \glex \lambda$.

\begin{example}\label{ex:lex} The partitions of 4 in lexicographic order are
$$(1^4)\llex (2,1^2)\llex (2^2)\llex (3,1) \llex (4).$$
\end{example}

Given a partition $\lambda = (\lambda _1, \ldots , \lambda _k)$ and commuting variables $\{x_1, x_2, \ldots \}$, we define the \emph{monomial symmetric function} $m_\lambda$ to be
$$m_\lambda = \sum x_{i_1} ^{\lambda _1} \cdots x_{i_k} ^{\lambda _k}$$where the sum is over all $k$-tuples $(i_1, \ldots , i_k)$ of distinct indices that yield distinct monomials. 

\begin{example}\label{ex:m} We see that $m_{(2,1)} = x_1^2 x_2 + x_2^2 x_1 + x_1^2 x_3 + x_3^2 x_1 + \cdots .$
\end{example}

The set of all monomial symmetric functions forms a basis for the graded algebra of symmetric functions
$$\Sym = \bigoplus _{n\geq 0} \Sym ^n \subseteq \bR [[x_1, x_2, \ldots ]]$$where $\Sym ^0 = \spam \{1\}$ and $\Sym ^n  = \spam \{m_\lambda \suchthat \lambda \vdash n\}$ for $n\geq 1$. Hence each graded piece $\Sym ^n$ for $n\geq 1$ is a finite-dimensional real vector space with basis $ \{m_\lambda \suchthat \lambda \vdash n\}$.  

For our second required basis we need Young diagrams and Young tableaux. Given a partition $\lambda = (\lambda _1, \ldots , \lambda _k)\vdash n$, we call the array of $n$ left-justified boxes with $\lambda _i$ boxes in row $i$ from the top, for $1\leq i \leq k$, the \emph{Young diagram} of $\lambda$, also denoted by $\lambda$. Given a Young diagram we say that $T$ is a \emph{semistandard Young tableau (SSYT)} of \emph{shape} $\lambda$ if the boxes of $\lambda$ are filled with positive integers such that
\begin{enumerate}
\item the entries in each row weakly increase when read from left to right,
\item the entries in each column strictly increase when read from top to bottom.
\end{enumerate}
Two SSYTs of shape $(2,1)$ can be seen below in Example~\ref{ex:sasm}.
Given an SSYT $T$ we define the \emph{content} of $T$, denoted by $\cont(T)$, to be the list of nonnegative integers
$$\cont(T) = (c_1, \ldots , c_{\mmax})$$where $c_i$ is the number of times that $i$ appears in $T$ and $\mmax$ is the largest integer appearing in $T$. We say that an SSYT is of \emph{partition content} if $c_1\geq \cdots \geq c_{\mmax}>0$. With this in mind, if $\lambda$ and $\mu$ are partitions, then we define the \emph{Schur function} $s_\lambda$ to be
\begin{equation}\label{eq:sasm}s_\lambda = m_\lambda + \sum _{\mu\llex \lambda} K_{\lambda\mu} m_\mu\end{equation}where $K_{\lambda\mu}$ is the number of SSYTs, $T$, of shape $\lambda$ and  $\cont(T)=\mu$.

\begin{example}\label{ex:sasm} We see $s_{(2,1)} = m_{(2,1)} + 2 m_{(1,1,1)}$ from the following two SSYTs arising from the nonleading term.
$$\tableau{1&2\\3}\quad \tableau{1&3\\2}$$
\end{example}

We now define the \emph{$i$-th complete homogeneous symmetric function} to be
$$h_i=s_{(i)}$$and if $\lambda = (\lambda _1, \ldots , \lambda _k)$ is a partition then we define the \emph{complete homogeneous symmetric function} $h_\lambda$ to be
$$h_\lambda = h_{\lambda _1}\cdots h_{\lambda _k} = s_{(\lambda _1)}\cdots s_{(\lambda _k)}.$$Similarly, we define the \emph{$i$-th elementary symmetric function} to be
$$e_i=s_{(1^i)}$$and if $\lambda = (\lambda _1, \ldots , \lambda _k)$ is a partition then we define the \emph{elementary symmetric function} $e_\lambda$ to be
$$e_\lambda = e_{\lambda _1}\cdots e_{\lambda _k} = s_{(1^{\lambda _1})}\cdots s_{(1^{\lambda _k})}.$$

We have that $\{m_\lambda \suchthat \lambda \vdash n\}$, $\{s_\lambda \suchthat \lambda \vdash n\}$, $ \{h_\lambda \suchthat \lambda \vdash n\}$ and $\{e_\lambda \suchthat \lambda \vdash n\}$ are all bases of $\Sym ^n$ for $n\geq1$. Additionally, if $f\in \Sym$ is a nonnegative linear combination of elements in these bases, then using the vernacular we say that $f$ is, respectively, \emph{monomial-, Schur-, $h$-} or \emph{$e$-positive}. Also, in the following results, we use the notation {$\prob _n(\cdot\suchthat\cdot )$} to denote that the probability is being calculated in $\Sym ^n$ for $n\geq 1$. Considering its importance, our first result shows the rarity that a monomial-positive symmetric function is furthermore Schur-positive. This statement was previously determined by Bergeron--Patrias--Reiner using a proof method similar to that of Theorem~\ref{thm:genthm}. The statement without proof is given in \cite{Patrias}.

\begin{corollary}\label{cor:smprob}\cite{Patrias} Let $\mathcal{K}_\lambda$ denote the number of SSYTs of shape $\lambda$ and partition content. Then
\[\prob_n(s_\lambda\suchthat m_\lambda)=\prod_{\lambda\vdash n}(\mathcal{K}_\lambda)^{-1}.\]
\end{corollary}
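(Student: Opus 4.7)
The plan is to apply Theorem~\ref{thm:genthm} to $V=\Sym^n$, taking the $B$-basis to be the monomial basis $\{m_\lambda \suchthat \lambda\vdash n\}$ and the $A$-basis to be the Schur basis $\{s_\lambda \suchthat \lambda\vdash n\}$, indexing both bases by the partitions of $n$ listed in increasing lexicographic order. The required unitriangular, nonnegative expansion is precisely the content of equation~\eqref{eq:sasm}: we have $s_\lambda = m_\lambda + \sum_{\mu\llex\lambda} K_{\lambda\mu} m_\mu$ with each $K_{\lambda\mu}\in\bZ_{\geq 0}$, so the diagonal coefficient is $1$ and the off-diagonal coefficients $a^{(\lambda)}_\mu = K_{\lambda\mu}$ are nonnegative. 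The smallest partition of $n$ in lex order is $(1^n)$, and since the only SSYT of shape $(1^n)$ has content $(1^n)$, we have $s_{(1^n)}=m_{(1^n)}$, yielding the hypothesis $A_0=B_0$.

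Given these identifications, Theorem~\ref{thm:genthm} immediately gives
\[
\prob_n(s_\lambda\suchthat m_\lambda) \;=\; \prod_{\lambda\vdash n}\left(1 + \sum_{\mu\llex\lambda} K_{\lambda\mu}\right)^{-1},
\]
and the task reduces to identifying each factor with $\mathcal{K}_\lambda^{-1}$. The quantity $1 + \sum_{\mu\llex\lambda} K_{\lambda\mu}$ combines $K_{\lambda\lambda}=1$ (the single SSYT of shape $\lambda$ with content $\lambda$) with $K_{\lambda\mu}$ for every lex-smaller partition $\mu$. By~\eqref{eq:sasm}, no partition $\mu\glex\lambda$ appears in the Schur-to-monomial expansion of $s_\lambda$, so extending the sum to \emph{all} partitions $\mu$ of $n$ introduces no new terms. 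Consequently this sum enumerates all SSYTs of shape $\lambda$ across every possible partition content, which is the definition of $\mathcal{K}_\lambda$.

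The argument is short because Theorem~\ref{thm:genthm} does essentially all of the work; the only piece of substantive checking is the identification $1 + \sum_{\mu\llex\lambda} K_{\lambda\mu} = \mathcal{K}_\lambda$, and even this is immediate from the statement of~\eqref{eq:sasm}. I anticipate no real obstacle: the main care needed is simply to fix the lexicographic ordering so that $(1^n)$ is the initial basis element, which is where the compatibility $A_0=B_0$ is forced.
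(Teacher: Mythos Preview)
Your argument is correct and follows essentially the same route as the paper: apply Theorem~\ref{thm:genthm} with the Schur and monomial bases indexed in increasing lexicographic order, invoke the unitriangular expansion~\eqref{eq:sasm}, and identify each factor $\sum_\mu K_{\lambda\mu}$ with $\mathcal{K}_\lambda$ using $K_{\lambda\lambda}=1$ and $K_{\lambda\mu}=0$ for $\mu\glex\lambda$. The paper's proof is terser but otherwise identical in substance.
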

\begin{proof}
The result follows  from Theorem~\ref{thm:genthm} by first setting $A_0=s_{(1^n)}=m_{(1^n)}=B_0$, and ordering the basis elements in increasing order by taking their indices in lexicographic order. Then use Equation~\eqref{eq:sasm} along with $K_{\lambda\mu}=0$ if $\lambda \llex \mu$ and $K_{\lambda\lambda}=1$ \cite[Proposition 7.10.5]{EC2}.\end{proof}

\begin{example}\label{ex:smprob} For $n=3$ we have that $\mathcal{K}_{(3)}=3$, $\mathcal{K}_{(2,1)}=3$ and $\mathcal{K}_{(1,1,1)}=1$ from the following SSYTs. 
$$\tableau{1&1&1}\quad \tableau{1&1&2}\quad \tableau{1&2&3} \qquad \tableau{1&1\\2} \quad \tableau{1&2\\3}\quad \tableau{1&3\\2}\qquad \tableau{1\\2\\3}$$
Hence,
$$\prob _3 (s_\lambda \suchthat m_\lambda)= \left( \frac{1}{3} \right)\left( \frac{1}{3} \right)\left( \frac{1}{1} \right) = \frac{1}{9}.$$
\end{example}

\begin{corollary}\label{cor:sm0}
We have that 
\[\lim_{n\to\infty}\prob_n(s_\lambda\suchthat m_\lambda)=0.\]
\end{corollary}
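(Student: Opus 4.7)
The plan is to exploit the product formula from Corollary~\ref{cor:smprob},
\[
\prob_n(s_\lambda\suchthat m_\lambda)=\prod_{\lambda\vdash n}(\mathcal{K}_\lambda)^{-1},
\]
and to show that a single factor already forces the product to zero. Since every factor satisfies $\mathcal{K}_\lambda \geq 1$ (for any $\lambda = (\lambda_1,\ldots,\lambda_k) \vdash n$, filling row $i$ entirely with the entry $i$ produces an SSYT of shape $\lambda$ whose content $(\lambda_1,\ldots,\lambda_k)$ is itself a partition), the whole product is bounded above by the reciprocal of any one $\mathcal{K}_\lambda$. So it suffices to exhibit one family of shapes $\lambda = \lambda(n)$ for which $\mathcal{K}_\lambda \to \infty$ as $n\to\infty$.

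The natural candidate is the one-row shape $\lambda = (n)$. An SSYT of shape $(n)$ is just a weakly increasing sequence of positive integers of length $n$, which is completely determined by its content; the content being a partition means it has weakly decreasing multiplicities, so such SSYTs are in bijection with partitions of $n$. Hence $\mathcal{K}_{(n)} = p(n)$, the number of partitions of $n$. (As a sanity check, the excerpt's Example~\ref{ex:smprob} gives $\mathcal{K}_{(3)}=3$, matching $p(3)=3$.)

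Combining these two observations yields
\[
\prob_n(s_\lambda\suchthat m_\lambda) \;\leq\; \frac{1}{\mathcal{K}_{(n)}} \;=\; \frac{1}{p(n)},
\]
and since $p(n)\to\infty$, the probability tends to $0$. There is no real obstacle here; the only point that requires a moment of thought is identifying a shape whose $\mathcal{K}_\lambda$ is easy to count exactly and provably unbounded, for which the single-row shape is ideal. (If desired, one could give an even cruder bound by noting that many $\mathcal{K}_\lambda$ exceed $1$, but using $\mathcal{K}_{(n)}=p(n)$ makes the argument maximally short.)
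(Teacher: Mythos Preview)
Your proof is correct, but it takes a different route from the paper's. Both arguments start from the product formula in Corollary~\ref{cor:smprob} and bound it above by something tending to zero, yet they exploit different factors. The paper exhibits, for every $\lambda\neq(1^n)$, two distinct SSYTs of shape $\lambda$ with partition content (the row-$i$-filled-with-$i$ tableau you mention, together with the tableau obtained by writing $1,2,\ldots,n$ left to right, row by row), so that $\mathcal{K}_\lambda\geq 2$ for all such $\lambda$ and hence $\prod_{\lambda\vdash n}(\mathcal{K}_\lambda)^{-1}\leq 2^{-(p(n)-1)}$. You instead isolate the single factor $\mathcal{K}_{(n)}$ and compute it exactly as $p(n)$, obtaining the bound $1/p(n)$. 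Your approach is shorter and needs only one explicit tableau plus a clean bijection; the paper's approach yields a far sharper decay rate and, perhaps more importantly, provides the template reused verbatim in the proofs of the subsequent limit corollaries (each of which shows the relevant count is at least $2$ for all but one index).
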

\begin{proof}
Let $\lambda=(\lambda_1,\ldots,\lambda_k)$ be a partition of $n$, and consider the following two fillings of $\lambda$. For the first, fill the boxes in the top row with $1,\ldots,\lambda_1$ from left to right, the second row with $\lambda_1+1,\ldots,\lambda_1+\lambda_2$, etc. For the second, fill the boxes in row $i$ from the top with $i$ for $1\leq i \leq k$. For $\lambda\neq (1^n)$, these fillings are distinct, and thus $\mathcal{K}_\lambda\geq 2$. It  follows that 

\[0\leq \prod_{\lambda\vdash n}(\mathcal{K}_\lambda)^{-1}\leq \frac{1}{2^{p(n)-1}},\]
where $p(n)$ denotes the number of partitions of $n$, and hence 
\[0\leq \lim_{n\to\infty}\prod_{\lambda\vdash n}(\mathcal{K}_\lambda)^{-1}\leq \lim_{n\to\infty}\frac{1}{2^{p(n)-1}}=0.\]
\end{proof}

\begin{corollary}\label{cor:ehsprob} Let $\mathcal{E}_\lambda$ be the number of SSYTs with content $\lambda$. Then
\[\prob_n(e_\lambda \suchthat s_\lambda)=\prob_n(h_\lambda\suchthat s_\lambda)=\prod_{\lambda\vdash n}(\mathcal{E}_\lambda)^{-1}.\] 
\end{corollary}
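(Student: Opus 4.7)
The plan is to invoke Theorem~\ref{thm:genthm} twice, each time taking $B$ to be the Schur basis of $\Sym^n$: once with $A$ equal to the $h$-basis and once with $A$ equal to the $e$-basis. In each application I need (i) an ordering of the partitions of $n$ for which the change-of-basis matrix satisfies the triangularity hypothesis with $a_j^{(j)}=1$ and $a_i^{(j)}\geq 0$; (ii) the base case $A_0=B_0$; and (iii) an identification of $\sum_i a_i^{(j)}$ with an appropriate $\mathcal{E}$-number. The two identities will then follow from the product formula of Theorem~\ref{thm:genthm} after a small cosmetic reindexing in the $e$-case.

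For the equality $\prob_n(h_\lambda\suchthat s_\lambda)=\prod_{\lambda\vdash n}\mathcal{E}_\lambda^{-1}$, I would order the partitions $\lambda^{(0)}\grlex \lambda^{(1)}\grlex \cdots \grlex \lambda^{(d)}$ in reverse lexicographic order and set $B_j=s_{\lambda^{(j)}}$, $A_j=h_{\lambda^{(j)}}$. The classical expansion
\[h_\mu=\sum_{\lambda\vdash n}K_{\lambda\mu}\,s_\lambda,\]
combined with $K_{\mu\mu}=1$ and the fact that $K_{\lambda\mu}\neq 0$ forces $\lambda$ to dominate $\mu$, and hence $\lambda\geq_{\mathit{lex}}\mu$, supplies the required triangularity. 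The base case $A_0=h_{(n)}=s_{(n)}=B_0$ is immediate. For each $j$, the row-sum $\sum_i a_i^{(j)}=\sum_{\lambda\vdash n}K_{\lambda\lambda^{(j)}}$ is by definition the total number $\mathcal{E}_{\lambda^{(j)}}$ of SSYTs of content $\lambda^{(j)}$, so Theorem~\ref{thm:genthm} delivers the product directly.

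The $e$-case carries one extra twist: applying the standard involution $\omega$ to the expansion above yields $e_\mu=\sum_{\nu\vdash n}K_{\nu'\mu}\,s_\nu$, whose unique leading Schur term is $s_{\mu'}$ rather than $s_\mu$. I would therefore order the partitions in \emph{lexicographic} order, $\lambda^{(0)}\llex \lambda^{(1)}\llex \cdots \llex \lambda^{(d)}$, set $B_j=s_{\lambda^{(j)}}$, and pair each $B_j$ with $A_j=e_{(\lambda^{(j)})'}$, chosen precisely so that the leading $s$-term of $A_j$ equals $B_j$. Triangularity follows because $K_{\nu'(\lambda^{(j)})'}\neq 0$ forces $\nu\leq \lambda^{(j)}$ in dominance (hence in lex), and the base case is $A_0=e_{(n)}=s_{(1^n)}=B_0$. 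The row-sum is $\mathcal{E}_{(\lambda^{(j)})'}$, and the main (and essentially only) obstacle is the final bookkeeping step: using the bijection $\lambda\mapsto \lambda'$ on partitions of $n$ to rewrite $\prod_j \mathcal{E}_{(\lambda^{(j)})'}^{-1}=\prod_{\lambda\vdash n}\mathcal{E}_\lambda^{-1}$, thereby matching both probabilities to the common claimed value.
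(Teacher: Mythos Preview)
Your argument is correct, and for the $h$-case it is essentially identical to the paper's. For the $e$-case the paper takes a shorter, more conceptual route: rather than re-verifying the hypotheses of Theorem~\ref{thm:genthm}, it invokes the involution $\omega$ (with $\omega(h_\lambda)=e_\lambda$ and $\omega(s_\lambda)=s_{\lambda'}$) as a bijection from Schur-positive $h$-positive functions to Schur-positive $e$-positive functions; since $\omega$ permutes the vertices of the Schur simplex $\Bslice$, it carries the $h$-slice to the $e$-slice while preserving the volume ratio, giving $\prob_n(e_\lambda\suchthat s_\lambda)=\prob_n(h_\lambda\suchthat s_\lambda)$ directly. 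Your approach instead applies Theorem~\ref{thm:genthm} a second time, pairing $A_j=e_{(\lambda^{(j)})'}$ with $B_j=s_{\lambda^{(j)}}$ under lexicographic order and then reindexing the product via $\lambda\mapsto\lambda'$. This is a bit more bookkeeping, but it has the virtue of being self-contained: it does not require the reader to check that $\omega$ interacts correctly with the volume-ratio definition of $\prob$.
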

\begin{proof}
{By \cite[Proposition 7.10.5 and Corollary 7.12.4]{EC2}} we have that
\begin{equation}\label{eq:hass}h_\lambda = s_\lambda + \sum _{\mu\glex \lambda} K_{\mu\lambda} s_\mu.\end{equation}The result for $\prob _n(h_\lambda\suchthat s_\lambda)$ now follows from Equation~\eqref{eq:hass} and Theorem~\ref{thm:genthm}, {along with $K_{\mu\lambda}=0$ if $\mu\llex \lambda$ and $K_{\lambda\lambda = 1}$ \cite[Proposition 7.10.5]{EC2},} by setting $A_0=h_n=s_{(n)}=B_0$ and by ordering the basis elements in increasing order by taking their indices in reverse lexicographic order. The result for $\prob_n(e_\lambda \suchthat s_\lambda)$ now follows from applying the involution $\omega$ to {that acts as a bijection from Schur-positive functions that are $h$-positive to Schur-positive
functions that are $e$-positive. It satisfies}
$$\omega(h_\lambda)=e_\lambda \mbox{ and } \omega(s_\lambda)=s_{\lambda '}$$where $\lambda ' $ is the transpose of $\lambda$, that is, the partition whose parts are obtained from $\lambda$ with maximum part $\mmax(\lambda)$ by letting $\lambda _i ' =$ the number of parts of $\lambda \geq i$, for $1\leq i \leq \mmax(\lambda)$. 
\end{proof}

\begin{example}\label{ex:ehsprob}
For $n=3$ we have that $\mathcal{E}_{(3)}=1$, $\mathcal{E}_{(2,1)}=2$ and $\mathcal{E}_{(1,1,1)}=4$ from the following SSYTs. 
$$\tableau{1&1&1}\qquad \tableau{1&1&2}\qquad  \tableau{1&1\\2} \qquad \tableau{1&2&3} \qquad \tableau{1&2\\3}\qquad \tableau{1&3\\2}\qquad \tableau{1\\2\\3}$$
Hence,
$$\prob _3 (e_\lambda \suchthat s_\lambda)= \prob _3 (h_\lambda \suchthat s_\lambda)=\left( \frac{1}{1} \right)\left( \frac{1}{2} \right)\left( \frac{1}{4} \right) = \frac{1}{8}.$$
\end{example}

\begin{corollary}\label{cor:ehs0}
We have that \[\lim_{n\to\infty}\prob_n(e_\lambda\suchthat s_\lambda)=\lim_{n\to\infty}\prob_n(h_\lambda\suchthat s_\lambda)=0.\]
\end{corollary}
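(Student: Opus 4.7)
The plan is to mimic the proof of Corollary~\ref{cor:sm0}: by Corollary~\ref{cor:ehsprob} both limits equal $\lim_{n\to\infty}\prod_{\lambda\vdash n}(\mathcal{E}_\lambda)^{-1}$, so it suffices to show that $\mathcal{E}_\lambda \geq 2$ for every partition $\lambda\vdash n$ except possibly the single partition $\lambda=(n)$. Once this is in hand, we get $0\leq \prod_{\lambda\vdash n}(\mathcal{E}_\lambda)^{-1}\leq 2^{-(p(n)-1)}$, and the right-hand side tends to $0$ since $p(n)\to\infty$.

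The main step is therefore to exhibit, for each $\lambda=(\lambda_1,\ldots,\lambda_k)\vdash n$ with $k\geq 2$, at least two distinct SSYTs of content $\lambda$. First I would take the single-row tableau of shape $(n)$ filled with $\lambda_1$ copies of $1$, then $\lambda_2$ copies of $2$, and so on through $\lambda_k$ copies of $k$; since $\lambda_1\geq \lambda_2\geq\cdots\geq \lambda_k$ is a partition, this row is weakly increasing and hence a valid SSYT with content $\lambda$. Second I would take the tableau of shape $\lambda$ itself whose $i$-th row contains only the entry $i$ for each $1\leq i\leq k$; its rows are constant and its columns strictly increase, and its content is again $\lambda$. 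These two SSYTs have different shapes whenever $k\geq 2$, so they are distinct, giving $\mathcal{E}_\lambda\geq 2$.

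The only remaining case is $\lambda=(n)$, where indeed $\mathcal{E}_{(n)}=1$ (the unique SSYT having only $1$'s is the single row), but this single factor of $1$ in the product is harmless. Combining, we have
\[0\leq \prod_{\lambda\vdash n}(\mathcal{E}_\lambda)^{-1}\leq \frac{1}{2^{p(n)-1}},\]
and letting $n\to\infty$ yields the desired limit of $0$, simultaneously for both $\prob_n(e_\lambda\suchthat s_\lambda)$ and $\prob_n(h_\lambda\suchthat s_\lambda)$.

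I do not anticipate any serious obstacle: the argument is formally parallel to Corollary~\ref{cor:sm0}, and the only subtlety is that here $\mathcal{E}_\lambda$ counts SSYTs of content $\lambda$ of \emph{arbitrary} shape (rather than a fixed shape with varying content), which actually makes it easier to produce two distinct tableaux because we have an extra degree of freedom in the shape.
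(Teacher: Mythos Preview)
Your proof is correct and follows essentially the same strategy as the paper: show $\mathcal{E}_\lambda\geq 2$ for every $\lambda\neq(n)$ by exhibiting two distinct SSYTs of content $\lambda$, then bound the product by $2^{-(p(n)-1)}$. The only difference is cosmetic: the paper also uses the tableau of shape $\lambda$ with row $i$ filled by $i$, but for its second witness it moves one box of entry $k$ from row $k$ up to row $1$ (yielding shape $(\lambda_1+1,\lambda_2,\ldots,\lambda_{k-1},\lambda_k-1)$), whereas your choice of the single-row tableau of shape $(n)$ is if anything cleaner.
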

\begin{proof}
As in the proof of Corollary~\ref{cor:sm0}, the result will follow from showing that $\mathcal{E}_\lambda\geq 2$ for all $\lambda\neq (n)$. Indeed, first consider the tableau $T$ of shape $\lambda=(\lambda_1,\ldots,\lambda_k)$ with the boxes in row $i$ from the top filled with $i$ for $1\leq i \leq k$. Second, consider the tableau of shape $(\lambda_1+1,\lambda_2,\ldots,\lambda_{k-1},\lambda_k-1)$ obtained from $T$ by moving the rightmost box filled with $k$ from row $k$ to row 1. These are distinct for all $\lambda\neq (n)$, hence $\mathcal{E}_\lambda\geq 2$ for all $\lambda\neq (n)$.
\end{proof}

\begin{remark}
One can form a square matrix with the $K_{\lambda\mu}$ (known as the \textit{Kostka numbers}), where $\lambda$ and $\mu$ vary over all partitions of $n$, and rows and columns are ordered in lexicographic order. Then $\mathcal{K}_\lambda$ and $\mathcal{E}_\lambda$ may be interpreted as a row sum and as a column sum of this matrix, respectively.
\end{remark}

Since elementary symmetric functions are Schur-positive, which in turn are monomial-positive, it is natural to compute the following.

\begin{corollary}\label{cor:emprob} Let $\mathcal{M}_\lambda$ be the number of (0,1)-matrices with row sum $\lambda$ and column sum a partition. Then
\[\prob_n(e_\lambda\suchthat m_\lambda)=\prod_{\lambda\vdash n}(\mathcal{M}_\lambda)^{-1}.\]
\end{corollary}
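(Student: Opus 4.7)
The plan is to apply Theorem~\ref{thm:genthm} to the transition from the $e$-basis to the $m$-basis of $\Sym^n$, mirroring the proof of Corollary~\ref{cor:ehsprob}. The essential input is the classical expansion
\[ e_\lambda = \sum_{\mu \vdash n} N_{\lambda\mu}\, m_\mu, \]
where $N_{\lambda\mu}$ is the number of $(0,1)$-matrices with row sums $\lambda$ and column sums $\mu$; this follows immediately from writing $e_\lambda = e_{\lambda _1}\cdots e_{\lambda _k}$ and encoding each monomial contribution as the incidence matrix of the chosen variable subsets \cite[Proposition 7.4.1]{EC2}.

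Next, I would verify the unitriangularity of this change of basis with respect to conjugation. The unique $(0,1)$-matrix with row sums $\lambda$ and column sums $\lambda'$ is the one placing $\lambda_k$ leading ones in row $k$, so $N_{\lambda\lambda'} = 1$, while the Gale--Ryser theorem gives $N_{\lambda\mu} = 0$ unless $\mu$ is dominated by $\lambda'$. Since dominance refines lexicographic order, this is precisely the triangularity hypothesis of Theorem~\ref{thm:genthm} once the indexing is set up correctly.

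Concretely, I would order partitions of $n$ in lex order $\pi_0 \llex \pi_1 \llex \cdots \llex \pi_d$ and set $A_j = e_{\pi_j'}$ and $B_j = m_{\pi_j}$. Then $\pi_0 = (1^n)$ gives $A_0 = e_{(n)} = m_{(1^n)} = B_0$, and the triangularity above provides $a_j^{(j)} = 1$ with all other $a_i^{(j)} \geq 0$ supported on $i < j$. Applying Theorem~\ref{thm:genthm}, the row sum $\sum_i a_i^{(j)} = \sum_{\mu\vdash n} N_{\pi_j'\mu}$ is exactly the number of $(0,1)$-matrices with row sum $\pi_j'$ whose column sums form some partition of $n$, which is $\mathcal{M}_{\pi_j'}$. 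Since conjugation is a bijection on partitions of $n$, the product $\prod_j \mathcal{M}_{\pi_j'}^{-1}$ collapses to $\prod_{\lambda \vdash n} \mathcal{M}_\lambda^{-1}$, giving the claim.

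The only step requiring care — and the only substantive departure from the proof of Corollary~\ref{cor:ehsprob} — is recognizing that $e_\lambda$ must be paired with $m_{\lambda'}$ rather than $m_\lambda$ in the indexing, because the leading monomial term of $e_\lambda$ is $m_{\lambda'}$. Once this conjugation is built into the setup and Gale--Ryser supplies the dominance bound, Theorem~\ref{thm:genthm} applies essentially verbatim.
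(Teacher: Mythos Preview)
Your proposal is correct and follows essentially the same approach as the paper: both use the classical expansion $e_\lambda = \sum_\mu M_{\lambda\mu}\,m_\mu$ via $(0,1)$-matrices, invoke the unitriangularity $M_{\lambda\lambda'}=1$ and $M_{\lambda\mu}=0$ unless $\mu\trianglelefteq\lambda'$ (the paper cites \cite[Theorem~7.4.4]{EC2}, you cite Gale--Ryser; these are equivalent), and then apply Theorem~\ref{thm:genthm} after pairing $e_\lambda$ with $m_{\lambda'}$. The only cosmetic difference is bookkeeping: the paper indexes $A_j$ by $\lambda$ in reverse lexicographic order and $B_j$ by $\mu$ with $\mu'$ in reverse lexicographic order, whereas you index $B_j=m_{\pi_j}$ by $\pi_j$ in lexicographic order and set $A_j=e_{\pi_j'}$; both orderings are linear extensions of dominance, so the triangularity hypothesis of Theorem~\ref{thm:genthm} is satisfied either way.
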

\begin{proof}
Let $A_0=e_n=m_{(1^n)}=B_0$. {Order the basis elements of $A$ in increasing order by taking their indices in reverse lexicographic order. Order the basis elements of $B$ in increasing order by taking the transpose,  as in the proof of Corollary ~\ref{cor:ehsprob}, of their indices in reverse lexicographic order. By \cite[Proposition 7.4.1 and Theorem 7.4.4]{EC2} we have} that
\[e_\lambda={m_{\lambda '}} + \sum_{\mu '\lrlex \lambda}M_{\lambda\mu}m_\mu,\]
where $\mu '$ is the transpose of $\mu$ as in the proof of Corollary ~\ref{cor:ehsprob},  and $M_{\lambda\mu}$ is the number of (0,1)-matrices whose row sums give the parts of $\lambda$ and whose column sums give the parts of $\mu$.
The result now follows from Theorem~\ref{thm:genthm} {along with $M_{\lambda\mu}=0$ if $\lambda \llex \mu '$ and $M_{\lambda\lambda '}=1$ \cite[Theorem 7.4.4]{EC2}}.
\end{proof}

\begin{example}\label{ex:emprob}
For $n=3$ we have that $\mathcal{M}_{(3)}=1$, $\mathcal{M}_{(2,1)}=4$ and $\mathcal{M}_{(1,1,1)}=10$ from the six $3\times 3$ permutation matrices, the matrix $\begin{pmatrix}1&1\\1&0\end{pmatrix}$ and the following four matrices and their transposes. 
$$\begin{pmatrix}1&1&1\end{pmatrix}\quad 
\begin{pmatrix}1&1&0\\0&0&1\end{pmatrix}\quad 
\begin{pmatrix}1&0&1\\0&1&0\end{pmatrix}\quad 
\begin{pmatrix}0&1&1\\1&0&0\end{pmatrix}\quad 
$$
Hence,
$$\prob _3 (e_\lambda \suchthat m_\lambda)= \left( \frac{1}{1} \right)\left( \frac{1}{4} \right)\left( \frac{1}{10} \right) = \frac{1}{40}.$$
\end{example}

\begin{corollary}\label{cor:em0}
We have that 
\[\lim_{n\to\infty}\prob_n(e_\lambda\suchthat m_\lambda)=0.\]
\end{corollary}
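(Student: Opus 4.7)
The plan is to follow the pattern of Corollaries~\ref{cor:sm0} and~\ref{cor:ehs0}: by Corollary~\ref{cor:emprob} it suffices to establish that $\mathcal{M}_\lambda\geq 2$ for every $\lambda\vdash n$ with $\lambda\neq(n)$, since then
\[0\leq \prod_{\lambda\vdash n}(\mathcal{M}_\lambda)^{-1}\leq \frac{1}{2^{p(n)-1}},\]
where $p(n)$ denotes the number of partitions of $n$, and the right-hand side tends to $0$ as $n\to\infty$.

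Given $\lambda=(\lambda_1,\ldots,\lambda_k)\vdash n$ with $k\geq 2$, I would exhibit two distinct $(0,1)$-matrices counted by $\mathcal{M}_\lambda$. The first, $M_1$, is the ``left-justified'' matrix in which row $i$ is filled with $1$s in columns $1,\ldots,\lambda_i$; its column sums are exactly the conjugate partition $\lambda'$, so $M_1$ is counted by $\mathcal{M}_\lambda$. The second, $M_2$, is obtained from $M_1$ by removing the $1$ in row $k$, column $\lambda_k$, and inserting a $1$ in a brand-new column $\lambda_1+1$ of row $k$. This preserves the row sums and, by construction, produces a matrix different from $M_1$.

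The main obstacle is to verify that the column sums of $M_2$, namely
\[\lambda'_1,\ldots,\lambda'_{\lambda_k-1},\ \lambda'_{\lambda_k}-1,\ \lambda'_{\lambda_k+1},\ldots,\lambda'_{\lambda_1},\ 1,\]
still form a weakly decreasing sequence. Most of the required inequalities are inherited from $\lambda'$ being a partition; the genuinely new checks are $\lambda'_{\lambda_k}-1\geq \lambda'_{\lambda_k+1}$ and $\lambda'_{\lambda_1}\geq 1$. The second is immediate since $\lambda_1$ is a part of $\lambda$, and the first I expect to follow from the observation that $\lambda'_{\lambda_k}-\lambda'_{\lambda_k+1}$ counts the parts of $\lambda$ equal to $\lambda_k$, hence is at least $1$ because $\lambda_k$ itself is such a part. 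Once this verification is in place, the bound above completes the argument.
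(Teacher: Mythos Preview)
Your proposal is correct and follows the same overall strategy as the paper: reduce to showing $\mathcal{M}_\lambda\geq 2$ for every $\lambda\neq(n)$ by exhibiting two explicit $(0,1)$-matrices with row sum $\lambda$ and partition column sum, then bound the product by $2^{-(p(n)-1)}$.

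The only difference is in the choice of the two witness matrices. You take the left-justified matrix with column sums $\lambda'$ and then move the single $1$ in position $(k,\lambda_k)$ out to a new column $\lambda_1+1$; the verification that the resulting column sums $\lambda'_1,\ldots,\lambda'_{\lambda_k}-1,\ldots,\lambda'_{\lambda_1},1$ remain weakly decreasing is exactly as you sketch (the key inequality $\lambda'_{\lambda_k}-1\geq\lambda'_{\lambda_k+1}$ holds because $\lambda'_{\lambda_k}-\lambda'_{\lambda_k+1}$ counts the parts of $\lambda$ equal to $\lambda_k$). The paper instead starts from the $k\times n$ matrix whose first $\lambda_1$ columns have a $1$ only in row $1$, whose next $\lambda_2$ columns have a $1$ only in row $2$, and so on, so that every column sum is $1$; the second matrix is obtained by swapping columns $\lambda_1$ and $\lambda_1+1$. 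The paper's choice is marginally cleaner because the column sums are $(1^n)$ before and after, so no inequality needs checking, but your argument is equally valid and no harder.
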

\begin{proof}
As before, in the proof of Corollary~\ref{cor:sm0}, the result will follow if we show that $\mathcal{M}_\lambda\geq 2$ for all $\lambda=(\lambda_1,\ldots,\lambda_k)\neq (n)$. Consider the matrix where the first $\lambda_1$ columns have a 1 in row 1 and 0's everywhere else, the next $\lambda_2$ columns have a 1 in row 2 and 0's everywhere else, the next $\lambda_3$ columns have a 1 in row 3 and 0's everywhere else, etc. We obtain a second valid (0,1)-matrix by swapping column $\lambda_1$ with column $\lambda_1+1$.
\end{proof}

\section{Probabilities of quasisymmetric function positivity}\label{sec:probqsym} We now turn our attention to quasisymmetric functions, and again begin by recalling pertinent combinatorial concepts. A \emph{composition} $\alpha = (\alpha _1, \ldots , \alpha _k)$ of $n$, denoted by $\alpha \vDash n$, is a list of positive integers whose \emph{parts} $\alpha _i$ sum to $n$. Observe that every composition $\alpha$ determines a partition $\lambda(\alpha)$, which is obtained by reordering the parts of $\alpha$ {into} weakly decreasing order. Also recall the bijection between compositions of $n$ and subsets of $[n-1] = \{1, \ldots , n-1\}$. Namely, given $\alpha = (\alpha _1, \ldots , \alpha _k) \vDash n$, its corresponding set is $\set (\alpha) = \{\alpha _1, \alpha _1 +\alpha _2, \ldots , \alpha _1 + \cdots + \alpha _{k-1}\} \subseteq [n-1]$. Conversely, given $S=\{s_1, \ldots , s_{k-1}\} \subseteq [n-1]$ its corresponding composition is $\comp(S) = (s_1, s_2 - s_1, \ldots , n-s_{k-1}) \vDash n$. Lastly, the empty set is in bijection with $(n)$. We again use the abbreviation $i^j$ to mean $j$ consecutive parts equal to $i$, and extend the definition of lexicographic order from the previous section for partitions to encompass compositions. We then use this extension to define a total order on compositions of $n$. Given compositions $\alpha, \beta$ we say $\beta \btr \alpha$ if $\lambda (\beta) \llex \lambda (\alpha)$ or $\lambda (\beta) = \lambda (\alpha)$ and $\beta \llex \alpha$.

\begin{example}\label{ex:btr} The compositions of 4 in $\btr$ order are
$$(1^4)\btr (1^2, 2) \btr (1,2,1) \btr (2, 1^2) \btr (2^2) \btr (1,3) \btr (3,1) \btr (4).$$
\end{example}

There is also a partial order on compositions of $n$, which will be useful later. Given compositions $\alpha, \beta$ we say that $\alpha$ is a \emph{proper coarsening} of $\beta$ (or $\beta$ is a \emph{proper refinement} of $\alpha$) denoted by $\beta \refines \alpha$ if we can obtain $\alpha$ by nontrivially adding together adjacent parts of $\beta$. For example, $(1,2,1)\refines (1,3)$. Observe that $\beta \refines \alpha$ if and only if $\set(\alpha) \subset \set (\beta)$.

Now, similar to the previous section, given a composition $\alpha = (\alpha _1 ,\ldots , \alpha _k)$ and commuting variables $\{ x_1, x_2, \ldots \}$ we define the \emph{monomial quasisymmetric function} $M_\alpha$ to be
$$M_\alpha = \sum _{i_1< \cdots <i_k} x_{i_1} ^{\alpha _1}\cdots x_{i_k} ^{\alpha _k}$$and the \emph{fundamental quasisymmetric function} $F_\alpha$ to be
$$F_\alpha = M_\alpha + \sum _{\beta \refines \alpha} M_\beta.$$

\begin{example}\label{ex:mandf} We compute $M_{(2,1)} = x_1^2x_2 + x_1^2x_3+\cdots$ and $F_{(2,1)}=M_{(2,1)}+M_{(1,1,1)}.$
\end{example}

The set of monomial quasisymmetric functions or the set of fundamental quasisymmetric functions forms a basis for the graded algebra of quasisymmetric functions $$\QSym = \bigoplus _{n\geq 0} \QSym ^n \subseteq \bR [[x_1, x_2, \ldots ]]$$where $\QSym ^0 = \spam \{1\}$ and $\QSym ^n  = \spam \{M_\alpha \suchthat \alpha \vDash n\}= \spam \{F_\alpha \suchthat \alpha \vDash n\}$ for $n\geq 1$. Hence each graded piece $\QSym ^n$ for $n\geq 1$ is a finite-dimensional real vector space with basis $\{M_\alpha \suchthat \alpha \vDash n\}$ or $\{F_\alpha \suchthat \alpha \vDash n\}$. 

In order to define our third and final basis of $\QSym$ we need composition diagrams and composition tableaux. Given a composition $\alpha = (\alpha _1, \ldots , \alpha _k) \vDash n$, we call the array of $n$ left-justified boxes with $\alpha _i$ boxes in row $i$ from the top, for $1\leq i \leq k$, the \emph{composition diagram} of $\alpha$, also denoted by $\alpha$. Given a composition diagram $\alpha \vDash n$, we say $\tau$ is a \emph{semistandard composition tableau (SSCT)} of \emph{shape} $\alpha$ if the boxes of $\alpha$ are filled with positive integers such that
\begin{enumerate}
\item the entries in each row weakly decrease when read from left to right,
\item the entries in the leftmost column strictly increase when read from top to bottom,
\item if we denote  the box in $\tau$ that is in the $i$-th row from the top and $j$-th column from the left by $\tau(i,j)$, then if $i<j$ and $\tau(j,m)\leq \tau (i,m-1)$ then $\tau(i, m)$ exists and $\tau(j,m)< \tau(i,m)$.
\end{enumerate}
Furthermore,  if each of the numbers $1, \ldots , n$ appears exactly once, then we say that $\tau$ is a \emph{standard composition tableau {(SCT)}}. Intuitively we can think of the third condition as saying that if $a\leq b$ then $a<c$ in the following array of boxes.
$$\tableau{b&c\\ & \\ & a}$$Given an SSCT $\tau$ we define the \emph{content} of $\tau$, denoted by $\cont(\tau)$, to be the list of nonnegative integers
$$\cont(\tau) = (c_1, \ldots , c_{\mmax})$$where $c_i$ is the number of times that $i$ appears in $\tau$ and $\mmax$ is the largest integer appearing in $\tau$. We say that an SSCT is of \emph{composition content} if $c_i\neq 0$ for all $1\leq i \leq \mmax$. Given an SCT $\tau$ of shape $\alpha \vDash n$, we define its \emph{descent set} to be
$$\des(\tau) = \{ i \suchthat i+1 \mbox{ is weakly right of } i\} \subseteq [n-1]$$and define its \emph{descent composition} to be
$$\comp(\tau) = \comp(\des(\tau)) \vDash n.$$ We can now define our final basis both in terms of monomial and fundamental quasisymmetric functions, respectively.  The first formula is \cite[Theorem 6.1]{QS} with \cite[Proposition 6.7]{QS} applied to it, and the second is \cite[Theorem 6.2]{QS} with \cite[Proposition 6.8]{QS} applied to it.

If $\alpha$ and $\beta$ are compositions, then we define the \emph{quasisymmetric Schur function} $\qs _\alpha$ to be
\begin{equation}\label{eq:qsasm}\qs _\alpha = M_\alpha + \sum _{\beta\btr \alpha} K_{\alpha\beta}^c M_\beta\end{equation}where $K_{\alpha\beta}^c$ is the number of SSCTs, $\tau$, of shape $\alpha$ and  $\cont(\tau)=\beta$. It is also given by
\begin{equation}\label{eq:qsasf}\qs _\alpha = F_\alpha + \sum _{\beta\btr \alpha} d_{\alpha\beta} F_\beta\end{equation}where $d_{\alpha\beta}$  is the number of SCTs, $\tau$, of shape $\alpha$ and  $\comp(\tau)=\beta$.

\begin{example}\label{ex:qsasmf}
$S_{(1,2)} = M_{(1,2)} + M_{(1,1,1)} = F_{(1,2)}$ from the following {SSCT, which is also an SCT, arising from the nonleading term in the first equality}.
$${\tableau{1\\3&2}}$$
\end{example}

We have that, in addition to $\{M_\alpha \suchthat \alpha \vDash n\}$ and  $\{F_\alpha \suchthat \alpha \vDash n\}$,   $\{\qs_\alpha \suchthat \alpha \vDash n\}$ is a basis of $\QSym ^n$ for $n\geq 1$, and if $f\in \QSym$ is a nonnegative linear combination of such basis elements, then we refer to $f$ respectively as being \emph{monomial quasisymmetric-, fundamental-} or \emph{quasisymmetric Schur-positive}. We also use the notation {$\prob _n(\cdot\suchthat\cdot )$} to denote that the probability is being calculated in $\QSym ^n$ for $n\geq 1$. Our first result is reminiscent of the probability that a monomial-positive symmetric function is furthermore Schur-positive in Corollary~\ref{cor:smprob}.

\begin{corollary}\label{cor:SMprob} Let $\mathcal{K}_\alpha^c$  be the number of SSCTs of shape $\alpha$ and composition content. Then
\[\prob_n(\qs_\alpha \suchthat M_\alpha)=\prod_{\alpha\vDash n}(\mathcal{K}_\alpha^c)^{-1}.\]\end{corollary}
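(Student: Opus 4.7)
My plan is to apply Theorem~\ref{thm:genthm} directly, taking the two bases of $\QSym^n$ to be $B=\{M_\alpha : \alpha \vDash n\}$ and $A=\{\qs_\alpha : \alpha \vDash n\}$, each listed in increasing $\btr$ order. Since $(1^n)$ is the $\btr$-minimum composition of $n$ (its underlying partition $(1^n)$ is the lex-minimum partition of $n$, so $(1^n)\btr \alpha$ for every other $\alpha\vDash n$, as illustrated by Example~\ref{ex:btr}), the sum in Equation~\eqref{eq:qsasm} is empty for $\alpha=(1^n)$, yielding $\qs_{(1^n)}=M_{(1^n)}$ and hence the required equality $A_0=B_0$.

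Equation~\eqref{eq:qsasm} already expresses $\qs_\alpha$ in the triangular form required by Theorem~\ref{thm:genthm},
\[\qs_\alpha = M_\alpha + \sum_{\beta\btr\alpha} K^c_{\alpha\beta}\, M_\beta,\]
with leading coefficient $1$ and nonnegative integer coefficients $K^c_{\alpha\beta}$ (these count SSCTs). The hypotheses of Theorem~\ref{thm:genthm} are thus all verified, and the theorem produces
\[\prob_n(\qs_\alpha \suchthat M_\alpha) = \prod_{\alpha\vDash n} \left( 1 + \sum_{\beta\btr\alpha} K^c_{\alpha\beta} \right)^{-1}.\]

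The only remaining step, which I expect to be the main (though routine) point, is to recognize the inner sum as $\mathcal{K}^c_\alpha$. Each SSCT $\tau$ of shape $\alpha$ with composition content has $\cont(\tau)=\beta$ for a unique composition $\beta\vDash n$, so by definition $\mathcal{K}^c_\alpha = \sum_\beta K^c_{\alpha\beta}$. The term for $\beta=\alpha$ contributes $K^c_{\alpha\alpha}=1$, matching the unique SSCT whose $i$-th row is filled with the single value $i$, while all remaining nonzero terms come from $\beta\btr\alpha$ by Equation~\eqref{eq:qsasm}. Hence the parenthesized quantity equals $\mathcal{K}^c_\alpha$, which gives the claimed product formula. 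There is no genuine obstacle in this argument: it is a near-verbatim adaptation of the proof of Corollary~\ref{cor:smprob}, with Equation~\eqref{eq:qsasm} and the total order $\btr$ playing the roles of Equation~\eqref{eq:sasm} and $\llex$, respectively.
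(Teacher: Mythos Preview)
Your proposal is correct and follows essentially the same approach as the paper: both order the bases by $\btr$, set $A_0=\qs_{(1^n)}=M_{(1^n)}=B_0$, invoke Equation~\eqref{eq:qsasm} for the required unitriangular expansion, and identify the row sums $1+\sum_{\beta\btr\alpha}K^c_{\alpha\beta}$ with $\mathcal{K}^c_\alpha$ using $K^c_{\alpha\alpha}=1$ and $K^c_{\alpha\beta}=0$ for $\alpha\btr\beta$. The paper simply cites \cite[Proposition 6.7]{QS} for these last two facts where you instead read them off from the form of Equation~\eqref{eq:qsasm} and exhibit the canonical SSCT; otherwise the arguments are the same.
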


\begin{proof} The result follows  from Theorem~\ref{thm:genthm} by {first} setting $A_0=\qs_{(1^n)}=M_{(1^n)}=B_0$, {and} ordering the basis elements in increasing order by taking their indices in $\btr$ order. {Then use Equation~\eqref{eq:qsasm} along with $K_{\alpha\beta}^c=0$ if $\alpha \btr \beta$ and $K_{\alpha\alpha}^c = 1$ \cite[Proposition 6.7]{QS}}.
\end{proof}

\begin{example}\label{ex:SMprob}
For $n=3$ we have that $\mathcal{K}^c_{(3)}=4$, $\mathcal{K}^c_{(2,1)}=2$, $\mathcal{K}^c_{(1,2)}=2$ and $\mathcal{K}^c_{(1,1,1)}=1$ from the following SSCTs. 
$$\tableau{1&1&1}\quad \tableau{2&1&1}\quad \tableau{2&2&1}\quad \tableau{3&2&1} \qquad \tableau{1&1\\2} \quad \tableau{2&1\\3}\qquad \tableau{1\\2&2} \quad \tableau{1\\3&2}\qquad \tableau{1\\2\\3}$$
Hence,
$$\prob _3 (\qs_\lambda \suchthat M_\lambda)= \left( \frac{1}{4} \right)\left( \frac{1}{2} \right)\left( \frac{1}{2} \right)\left( \frac{1}{1} \right) = \frac{1}{16}.$$
\end{example}

\begin{corollary}\label{cor:SM0} We have that
\[\lim_{n\to\infty}\prob_n(\mathcal{S}_\alpha\suchthat M_\alpha)=0.\]
\end{corollary}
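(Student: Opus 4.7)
My plan is to mirror the proof of Corollary~\ref{cor:sm0} (and its siblings): apply the product formula from Corollary~\ref{cor:SMprob}, show that almost all factors $\mathcal{K}^c_\alpha$ are at least $2$, and use that there are exponentially many compositions of $n$ to drive the product to $0$.

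Specifically, I would show that $\mathcal{K}^c_\alpha \geq 2$ whenever $\alpha \vDash n$ and $\alpha \neq (1^n)$, by exhibiting two distinct SSCTs of shape $\alpha$ whose contents are compositions. The first is the ``row filling'' $\tau_1$ that places $i$ in every cell of row $i$; its content is $(\alpha_1, \ldots, \alpha_k)$, which has all parts positive, hence is a valid composition content. The second is any SCT $\tau_2$ of shape $\alpha$, which exists because equation~\eqref{eq:qsasf} forces $d_{\alpha\alpha}=1$; since an SCT uses each of $1, \ldots, n$ exactly once, $\tau_2$ has content $(1^n)$. When $\alpha \neq (1^n)$, some $\alpha_i \geq 2$, so the contents of $\tau_1$ and $\tau_2$ differ and the tableaux are distinct.

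From here the combinatorial bookkeeping is routine: there are $2^{n-1}$ compositions of $n$, of which exactly one is $(1^n)$, so
\[0 \leq \prob_n(\qs_\alpha \suchthat M_\alpha) = \prod_{\alpha \vDash n}(\mathcal{K}^c_\alpha)^{-1} \leq \frac{1}{2^{2^{n-1}-1}},\]
and the right-hand side tends to $0$ as $n \to \infty$, giving the result by squeezing.

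The main obstacle will be verifying that $\tau_1$ really is an SSCT, specifically checking the somewhat opaque third axiom. I expect the check to be vacuous: whenever $i<j$, one has $\tau_1(j,m) = j > i = \tau_1(i,m-1)$, so the hypothesis $\tau_1(j,m) \leq \tau_1(i,m-1)$ never holds. Once that routine verification is dispatched, and the existence of at least one SCT of shape $\alpha$ is extracted from \eqref{eq:qsasf}, everything else reduces to the counting argument above.
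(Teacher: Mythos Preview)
Your proposal is correct and follows essentially the same approach as the paper: both exhibit the row filling (row $i$ all equal to $i$) and a standard composition tableau of shape $\alpha$ to show $\mathcal{K}^c_\alpha\geq 2$ for $\alpha\neq(1^n)$, then apply the same squeeze using $2^{n-1}$ compositions. The only cosmetic difference is that the paper writes down the SCT explicitly (filling the boxes with $n,n-1,\ldots,1$ row by row from the bottom), whereas you invoke its existence via $d_{\alpha\alpha}=1$.
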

\begin{proof}
Let $\alpha = (\alpha _1, \ldots , \alpha _k)$ be a composition of $n$ and consider the following two fillings of $\alpha$. For the first, fill the boxes of $\alpha$ such that the boxes in the bottom row contain $n, n-1, \ldots , n+1-\alpha _k$ from left to right, the next row up $n-\alpha _k, n-\alpha _k -1,  \ldots, n+1-\alpha_k-\alpha_{k-1}$ etc. For the second, fill the boxes in row $i$ from the top with $i$, for $1\leq i \leq k$. For $\alpha \neq (1^n)$ these fillings are distinct, and thus $\mathcal{K}_\alpha^c\geq 2$. Since the number of compositions of $n$ is $2^{n-1}$ it follows that
$$0\leq \lim_{n\to\infty}\prod_{\alpha\vDash n}(\mathcal{K}_\alpha^c)^{-1} \leq \lim_{n\to\infty}\frac{1}{2^{2^{n-1}-1}}=0.$$
\end{proof}

\begin{corollary} \label{cor:SFprob} Let $\mathcal{D}_\alpha$ be the number of SCTs of shape $\alpha$. Then
\[\prob_n(\qs_\alpha\suchthat F_\alpha)=\prod_{\alpha\vDash n}(\mathcal{D}_\alpha)^{-1}.\] \end{corollary}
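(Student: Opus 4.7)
The plan is to apply Theorem~\ref{thm:genthm} directly to the fundamental expansion in Equation~\eqref{eq:qsasf}, in a manner that parallels the proof of Corollary~\ref{cor:SMprob}. Taking $B_\alpha = F_\alpha$ and $A_\alpha = \qs_\alpha$, I would index the bases by the compositions of $n$ listed in increasing $\btr$ order. The minimum composition under $\btr$ is $(1^n)$, and the only SCT of shape $(1^n)$ is the single column filled $1, 2, \ldots, n$ from top to bottom, whose descent composition is $(1^n)$ itself. Hence Equation~\eqref{eq:qsasf} collapses to $\qs_{(1^n)} = F_{(1^n)}$, providing the base condition $A_0 = B_0$ required by Theorem~\ref{thm:genthm}.

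Next I would check the triangularity hypotheses. By \cite[Theorem 6.2]{QS} together with \cite[Proposition 6.8]{QS}, Equation~\eqref{eq:qsasf} expresses $\qs_\alpha$ with leading coefficient $1$ on $F_\alpha$ and nonnegative coefficients $d_{\alpha\beta}$ only on those $F_\beta$ with $\beta \btr \alpha$, which is precisely the form demanded by Theorem~\ref{thm:genthm}. Applying that theorem then yields
\[\prob_n(\qs_\alpha \suchthat F_\alpha) = \prod_{\alpha \vDash n}\left(1 + \sum_{\beta \btr \alpha} d_{\alpha\beta}\right)^{-1}.\]

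Finally, I would identify the inner factor with $\mathcal{D}_\alpha$. Since every SCT of shape $\alpha$ has a unique descent composition, and $d_{\alpha\beta}$ counts SCTs of shape $\alpha$ with descent composition $\beta$, the expression $1 + \sum_{\beta \btr \alpha} d_{\alpha\beta}$ totals all SCTs of shape $\alpha$, which is $\mathcal{D}_\alpha$ by definition. Substituting gives the stated formula. I do not anticipate any real obstacle here: the argument is essentially a transcription of Corollary~\ref{cor:SMprob} from the $(M,\qs)$ pair of bases to the $(F,\qs)$ pair, and the only nontrivial verification, namely that the leading $1$ in \eqref{eq:qsasf} accounts for a single SCT of shape $\alpha$ with descent composition $\alpha$, is already packaged into the cited triangularity statement.
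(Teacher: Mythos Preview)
Your proposal is correct and follows essentially the same approach as the paper: apply Theorem~\ref{thm:genthm} with $A_0=\qs_{(1^n)}=F_{(1^n)}=B_0$, order both bases by $\btr$, and invoke Equation~\eqref{eq:qsasf} together with the triangularity facts $d_{\alpha\beta}=0$ for $\alpha\btr\beta$ and $d_{\alpha\alpha}=1$ from \cite[Proposition~6.8]{QS}. Your version is simply more explicit about why $\qs_{(1^n)}=F_{(1^n)}$ and why the coefficient sum equals $\mathcal{D}_\alpha$, but the argument is the same.
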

\begin{proof}
The result follows  from Theorem~\ref{thm:genthm} by {first} setting $A_0=\qs_{(1^n)}=F_{(1^n)}=B_0$ {and} ordering the basis elements in increasing order by taking their indices in $\btr$ order. {Then use Equation~\eqref{eq:qsasf} along with $d_{\alpha\beta}=0$ if $\alpha \btr \beta$ and $d_{\alpha\alpha} = 1$ \cite[Proposition 6.8]{QS}}.
\end{proof}

\begin{example}\label{ex:SFprob}
For $n=3$ we have that $\mathcal{D}_{(3)}=1$, $\mathcal{D}_{(2,1)}=1$, $\mathcal{D}_{(1,2)}=1$ and $\mathcal{D}_{(1,1,1)}=1$ from the following SSCTs. 
$$\tableau{3&2&1} \qquad  \tableau{2&1\\3} \qquad  \tableau{1\\3&2}\qquad \tableau{1\\2\\3}$$
Hence,
$$\prob _3 (\qs_\lambda \suchthat F_\lambda)= 1.$$
\end{example}

\begin{corollary}\label{cor:SF0}
We have that 
\[\lim_{n\to\infty}\prob_n(\mathcal{S}_\alpha\suchthat F_\alpha)=0.\]
\end{corollary}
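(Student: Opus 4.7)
The starting point is Corollary~\ref{cor:SFprob}, which gives
\[\prob_n(\qs_\alpha \suchthat F_\alpha) = \prod_{\alpha \vDash n} (\mathcal{D}_\alpha)^{-1}.\]
Since each composition $\alpha$ satisfies $\mathcal{D}_\alpha \geq 1$, every factor is at most $1$. Unlike Corollaries~\ref{cor:sm0} and~\ref{cor:SM0}, a uniform ``$\mathcal{D}_\alpha \geq 2$ for all but one $\alpha$'' argument is not available here, since Example~\ref{ex:SFprob} shows $\mathcal{D}_\alpha = 1$ for every $\alpha \vDash 3$. My plan is instead to exhibit, for each $n$, a single composition $\alpha(n)$ with $\mathcal{D}_{\alpha(n)}$ unbounded in $n$; the remaining factors contribute at most $1$, and the product is squeezed to $0$.

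I would take $\alpha(n) = (1, n-1)$, a single box atop a row of length $n-1$. Any SCT $\tau$ of this shape is determined by the top entry $a = \tau(1,1)$, because the remaining $n-1$ entries must fill the bottom row in strictly decreasing order left to right. The leftmost-column condition gives $\tau(2,1) > a$, and the third SSCT axiom applied with $i=1$, $j=2$, $m=2$ forces $\tau(2,2) > a$ (the hypothesis $\tau(2,2) \leq \tau(1,1)$ would require the non-existent box $\tau(1,2)$). A short case check then shows these two requirements amount to demanding that the second-largest element of $\{1,\ldots,n\} \setminus \{a\}$ exceed $a$, which holds precisely for $a \in \{1, 2, \ldots, n-2\}$. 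Therefore $\mathcal{D}_{(1,n-1)} = n-2$.

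Combining these observations yields
\[0 \leq \prob_n(\qs_\alpha \suchthat F_\alpha) \leq \frac{1}{n-2},\]
and letting $n \to \infty$ gives the desired limit. The main technical point is the careful handling of the third SSCT axiom in the count of $\mathcal{D}_{(1,n-1)}$; once that is done, the conclusion follows immediately from the sandwich bound.
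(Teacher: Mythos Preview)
Your argument is correct: for $n\geq 3$ the count $\mathcal{D}_{(1,n-1)}=n-2$ is verified exactly as you describe (the only nonvacuous instance of the triple rule is $i=1$, $j=2$, $m=2$, forcing $\tau(2,2)>a$), and the sandwich $0\leq \prob_n(\qs_\alpha\suchthat F_\alpha)\leq 1/(n-2)$ then gives the limit. This is a genuinely different route from the paper's. The paper invokes the characterization of \cite[Theorem~4.4]{BvWmultiplicity} of those $\alpha$ with $\mathcal{D}_\alpha=1$, shows that the complementary set $\mathcal{A}_n$ of compositions with $\mathcal{D}_\alpha\geq 2$ satisfies $|\mathcal{A}_n|\geq 2^{n-3}$, and obtains the doubly exponential bound $\prob_n(\qs_\alpha\suchthat F_\alpha)\leq 2^{-2^{n-3}}$. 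Your approach is more elementary and fully self-contained---it needs no external structural result and only the definition of an SCT---at the cost of a much weaker (merely polynomial) rate of decay; the paper's approach buys a far sharper quantitative estimate but leans on a cited classification.
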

\begin{proof}
By \cite[Theorem 4.4]{BvWmultiplicity}, we know that $\mathcal{S}_\alpha=F_\alpha$ if and only if $\alpha=(m,1^{\epsilon_1},2,1^{\epsilon_2},\ldots ,2,1^{\epsilon_{k}})$, where $m\in\mathbb{N}_0 {=\{0,1,2,\ldots \}}$ ($m=0$ is understood to mean it does not appear in the composition), $k\in\mathbb{N}_0$, $\epsilon_i\in\mathbb{N}{=\{1,2,\ldots \}}$ for $i\in[k-1]$, and $\epsilon_k\in\mathbb{N}_0$.  

{Let $\mathcal{A}_n$ be the set of compositions of $n$ \emph{not} in the set of compositions described above. Note that if $\alpha\in \mathcal{A}_n$ then $\mathcal{D}_\alpha \geq 2$. Also note that if $\alpha = (\alpha _1, \ldots , \alpha _k) \in \mathcal{A}_n$, then $(\alpha _1, \ldots , \alpha _k +1), (\alpha _1, \ldots , \alpha _k,1) \in \mathcal{A}_{n+1}$. Hence $2|\mathcal{A}_n | \leq |\mathcal{A}_{n+1}|$. Using this repeatedly, along with $| \mathcal{A}_4 |=2$ since $\mathcal{A}_4 = \{(1,3),(2,2)\}$, yields that for $n\geq 5$}
{$$2^{n-3}\leq | \mathcal{A}_n | .$$}{Hence it follows that}
{\[0\leq \lim_{n\to\infty}\prod_{\alpha\vDash n}(\mathcal{D}_\alpha)^{-1}\leq \lim_{n\to\infty}\frac{1}{2^{|\mathcal{A}_n|}}\leq \lim_{n\to\infty}\frac{1}{2^{2^{n-3}}}=0.\]}
\end{proof}

We end with the most succinct of our formulas, namely the probability that a quasisymmetric monomial-positive function is furthermore fundamental-positive.

\begin{corollary}\label{cor:FMprob}  
\[\prob_n(F_\alpha\suchthat M_\alpha)=\frac{1}{(n-1)2^{n-2}}\]
\end{corollary}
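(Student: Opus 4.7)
My approach is to apply Theorem~\ref{thm:genthm} directly, with $B$-basis $\{M_\alpha\}_{\alpha \vDash n}$ and $A$-basis $\{F_\alpha\}_{\alpha \vDash n}$, indexed in $\btr$ order as in the proofs of Corollaries~\ref{cor:SMprob} and \ref{cor:SFprob}. The $\btr$-minimum composition is $(1^n)$, which admits no proper refinements, so the defining expansion $F_\alpha = M_\alpha + \sum_{\beta \refines \alpha} M_\beta$ forces $A_0 = F_{(1^n)} = M_{(1^n)} = B_0$.

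To verify the triangularity hypothesis of Theorem~\ref{thm:genthm}, I would check that $\beta \refines \alpha$ implies $\beta \btr \alpha$. Any proper refinement strictly increases the number of parts, so $\lambda(\beta) \neq \lambda(\alpha)$; moreover, $\lambda(\beta)$ is obtained from $\lambda(\alpha)$ by successively splitting parts, which strictly decreases dominance order, and hence strictly decreases lex order. Thus $\lambda(\beta) \llex \lambda(\alpha)$, so $\beta \btr \alpha$. Every nonzero coefficient $a_i^{(j)}$ equals $1$, and Theorem~\ref{thm:genthm} reduces to
$$\prob_n(F_\alpha \suchthat M_\alpha) = \prod_{\alpha \vDash n} r(\alpha)^{-1},$$
where $r(\alpha)$ counts the compositions that refine $\alpha$ (including $\alpha$ itself).

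The next step is to evaluate $r(\alpha)$ and take the product. Choosing a refinement of $\alpha = (\alpha_1, \ldots, \alpha_k)$ amounts to independently choosing a composition of each part $\alpha_i$, so $r(\alpha) = \prod_{i=1}^k 2^{\alpha_i - 1} = 2^{n - k(\alpha)}$, where $k(\alpha)$ is the number of parts of $\alpha$. Therefore
$$\prob_n(F_\alpha \suchthat M_\alpha) = 2^{-\sum_{\alpha \vDash n} (n - k(\alpha))}.$$
The remaining work is a bookkeeping sum, which I would handle using the familiar fact that the number of compositions of $n$ with exactly $k$ parts is $\binom{n-1}{k-1}$, together with the identities $\sum_{k} \binom{n-1}{k-1} = 2^{n-1}$ and $\sum_{k} k\binom{n-1}{k-1} = (n+1)2^{n-2}$; these yield $\sum_{\alpha \vDash n}(n - k(\alpha)) = (n-1)2^{n-2}$, from which the advertised formula falls out.

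I anticipate no serious obstacle. The only step requiring any thought is the triangularity verification, and that is immediate from the interaction between refinement and the dominance/lex order on the underlying partitions; everything else is a direct invocation of Theorem~\ref{thm:genthm} followed by an elementary binomial sum.
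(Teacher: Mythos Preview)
Your proposal is correct and follows essentially the same approach as the paper: set up the bases in $\btr$ order, apply Theorem~\ref{thm:genthm}, observe that the inner sum counts refinements of $\alpha$, and reduce to the exponent sum $(n-1)2^{n-2}$. The only cosmetic difference is that the paper evaluates the exponent sum via the subset bijection $\alpha \leftrightarrow \set(\alpha)$ (so that $r(\alpha)=2^{n-1-|\set(\alpha)|}$ and the sum becomes $\sum_{S\subseteq[n-1]}(n-1-|S|)=\sum_k k\binom{n-1}{k}$), whereas you compute $r(\alpha)=2^{n-k(\alpha)}$ directly and sum over compositions; your explicit check that refinement implies $\btr$ is a welcome addition that the paper leaves implicit.
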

\begin{proof}
Recall that \[F_\alpha = M_\alpha + \sum _{\beta \refines \alpha} M_\beta,\]
and that $\beta\prec\alpha$ if and only if $\set(\alpha)\subset\set(\beta)$. Letting $A_0=F_{(1^n)}=M_{(1^n)}=B_0$ and ordering the basis elements in increasing order by taking their indices in $\btr$ order, Theorem~\ref{thm:genthm} gives that 
\[\prob_n(F_\alpha\suchthat M_\alpha)=\prod_{\alpha\vDash n}\left(\sum_{\beta\preceq\alpha}1\right)^{-1}.\] Now 
\[\prod_{\alpha\vDash n}\left(\sum_{\beta\preceq\alpha}1\right) = 
\prod_{S\subseteq [n-1]}\left(\sum_{T\supseteq S}1\right)= \prod_{S\subseteq [n-1]}\left(2^{n-1-|S|}\right)=2^{\sum_{S\subseteq[n-1]}(n-1-|S|)},\]and 
 \[\sum_{S\subseteq[n-1]}(n-1-|S|)=\sum_{T\subseteq[n-1]}|T|=\sum_{k=0}^{n-1}k\binom{n-1}{k}=(n-1)2^{n-2},\] where the last equality is \cite[{Chapter 1 Exercise 2(b)}]{EC1}.
\end{proof}

\begin{example}\label{ex:FM} For $n=3$ we have that $\prob _3 (F_\alpha\suchthat M_\alpha) = \frac{1}{4}.$
\end{example}

The following corollary follows  from Corollary~\ref{cor:FMprob}.
\begin{corollary}\label{cor:FM0} We have that
\[\lim_{n\to\infty}\prob_n(F_\alpha\suchthat M_\alpha)=0.\]
\end{corollary}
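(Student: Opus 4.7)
The plan is to simply appeal to the closed-form expression provided by Corollary~\ref{cor:FMprob}. Since that corollary establishes the exact equality
\[\prob_n(F_\alpha \suchthat M_\alpha) = \frac{1}{(n-1)2^{n-2}},\]
all that remains is to observe that the right-hand side is a rational function in $n$ divided by an exponential in $n$, so it tends to $0$ as $n\to\infty$.

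Concretely, I would first note that $(n-1)2^{n-2}$ is a positive integer for all $n\geq 2$, so the sequence $\frac{1}{(n-1)2^{n-2}}$ is a sequence of positive real numbers. I would then bound it above crudely, for instance via $\frac{1}{(n-1)2^{n-2}} \leq \frac{1}{2^{n-2}}$ for $n\geq 2$, and observe that $\lim_{n\to\infty} \frac{1}{2^{n-2}} = 0$. A squeeze-style argument using $0 \leq \prob_n(F_\alpha \suchthat M_\alpha) \leq \frac{1}{2^{n-2}}$ then delivers the claim.

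There is no real obstacle here: unlike the earlier limit statements (Corollaries~\ref{cor:sm0}, \ref{cor:ehs0}, \ref{cor:em0}, \ref{cor:SM0}, \ref{cor:SF0}), which required combinatorial lower bounds on row or column sums of transition matrices, the quantity $\prob_n(F_\alpha \suchthat M_\alpha)$ has already been evaluated in closed form. Thus the proof is essentially a one-line computation that extracts the asymptotic behavior of $\frac{1}{(n-1)2^{n-2}}$, and the only thing worth explicitly writing is the statement of the limit itself.
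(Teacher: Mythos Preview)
Your proposal is correct and matches the paper's own argument exactly: the paper simply states that the corollary follows from Corollary~\ref{cor:FMprob}, i.e., from the explicit closed form for $\prob_n(F_\alpha\suchthat M_\alpha)$, which visibly tends to $0$. Your added squeeze bound is more detail than the paper gives, but the approach is identical.
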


\section*{Acknowledgements}\label{sec:acknow} The authors would like to thank Fran\c{c}ois Bergeron, Vic Reiner, and S\'{e}bastien Labb\'{e} for conversations that led to fruitful research directions, and LaCIM where some of the research took place. The first author received support from  the National Sciences and Engineering Research Council of Canada, CRM-ISM, and the Canada Research Chairs Program. The second author was supported in part by the National Sciences and Engineering Research Council of Canada, the Simons
Foundation, and the Centre de Recherches Math\'{e}matiques, through the Simons-CRM scholar-in-residence program.


\def\cprime{$'$}

\end{document}